\pgfplotsset{compat=1.18}
\newtheorem{remark}{Remark}
\newtheorem{theorem}{Theorem}
\newtheorem{lemma}{Lemma}
\newtheorem{corollary}{Corollary}
\newcommand{\lL}{\mathrm{L}}
\newcommand{\lC}{\mathrm{C}}
\newcommand{\lR}{\mathrm{R}}
\newcommand{\fg}{\eta\tau}
\newcommand{\fgsq}{\eta^2\tau^2}
\newcommand{\st}{\sigma^2\tau}
\begin{document}

\begin{frontmatter}

\title{Nonuniform-Grid Markov Chain Approximation of \\ Continuous Processes with Time-Linear Moments}

%% use optional labels to link authors explicitly to addresses:
 \author[label1]{Do Hyun Kim}
 \author[label2]{Ahmet Cetinkaya}
 \affiliation[label1]{organization={Shibaura Institute of Technology},
            addressline={am23105@shibaura-it.ac.jp},
             city={Tokyo},
%%             postcode={},
%%             state={},
             country={Japan}}
\affiliation[label2]{organization={Shibaura Institute of Technology},
             addressline={ahmet@shibaura-it.ac.jp},
             city={Tokyo},
%%             postcode={},
%%             state={},
             country={Japan}}

% \author{} %% Author name

% %% Author affiliation
% \affiliation{organization={},%Department and Organization
%             addressline={}, 
%             city={},
%             postcode={}, 
%             state={},
%             country={}}

%% Abstract
\begin{abstract}
We propose a method to approximate continuous-time, continuous-state stochastic processes by a discrete-time Markov chain defined on a nonuniform grid. Our method provides exact moment matching for processes whose first and second moments are linear functions of time. In particular, we show that, under certain conditions, the transition probabilities of a Markov chain can be chosen so that its first two moments match prescribed linear functions of time. These conditions depend on the grid points of the Markov chain and the coefficients of the linear mean and variance functions. Our proof relies on two recurrence relations for the expectation and variance across time. This approach enables simulation-based numerical analysis of continuous processes while preserving their key characteristics. We illustrate its efficacy by approximating continuous processes describing heat diffusion and geometric Brownian motion (GBM). For heat diffusion, we show that the heat profile at a set of points can be investigated by embedding those points inside the nonuniform grid of our Markov chain. For GBM, numerical simulations demonstrate that our approach, combined with suitable nonuniform grids, yields accurate approximations, with consistently small empirical Wasserstein-1 distances at long time horizons.
\end{abstract}

%%Graphical abstract
% \begin{graphicalabstract}
%\includegraphics{grabs}
% \end{graphicalabstract}

%%Research highlights
% \begin{highlights}
% \item Research highlight 1
% \item Research highlight 2
% \end{highlights}

\begin{keyword}
Markov Chains, Approximation, Finite Difference Method, Stochastic Diffusion Process, Geometric Brownian motion, Numerical Simulation.
\end{keyword}

\end{frontmatter}

\section{Introduction}\label{sec1} 

Many real-world systems are described by continuous-time stochastic processes and stochastic differential equations. Prototypical examples include Brownian motion, the Ornstein–Uhlenbeck process, and Lévy processes, each with applications across physics, finance, and engineering. In computational physics, the fundamental solution of the heat equation coincides with the probability density function of Brownian motion, motivating particle simulations where particles follow Brownian paths to approximate heat diffusion. In finance, geometric Brownian motion is a standard model for asset prices in the Black–Scholes framework. However, practical applications often require numerical approximations of continuous-time, continuous-space processes, since analytical solutions are not always available and, even when they are, can be costly to evaluate or store. Discrete-time and discrete-space approximations are therefore desirable and, in particular, enable efficient particle methods allowing parallel processing \cite{chertock2001particle} and large-scale GPU implementations~\cite{solomon2010option}.

While there are discretization methods like finite element method and finite difference methods~\cite{thomas2013numerical} for approximations of engineering processes, there are not many methods of approximating continuous stochastic processes with mathematical guarantees. In this paper, we aim to construct a discrete-time Markov chain defined over a nonuniform grid composed of infinitely many points on one dimensional line so as to approximate continuous-time stochastic processes for which the mean and variance are linear functions of time. We show that under certain conditions, our aim can be achieved and the transition probabilities of the Markov chain can be assigned so as to exactly match the mean and the variance of the continuous-time process. These transition probabilities are obtained as functions of mean and variance coefficients of the process, as well as the distances between the grid points.

We apply our approach to approximating heat diffusion and geometric Brownian motion (GBM), two important examples of continuous-time stochastic processes. In both cases of approximation, we introduce a time-scaling factor so that our proposed Markov chain evolves on a discrete time index that is has linear relationship with the physical time of the continuous process. For heat diffusion in one dimension, we consider the temperature evolution at selected spatial locations and embed these locations into our nonuniform grid. The resulting Markov chain approximation enables simulation-based investigation of heat profiles without solving the underlying partial differential equation directly. For GBM, which models asset price dynamics in mathematical finance, we construct a Markov chain on a nonuniform grid whose mean and variance match those of the log-return of a given GBM. This results in an approximation whose empirical distribution remains close to that of GBM over long time horizons. In particular, our numerical experiments demonstrate that, with appropriately chosen grids and time-scaling, the empirical Wasserstein-1 distances between the Markov chain and the GBM distributions remain consistently small at long time horizons. 

% While there is existing research that explores weak convergence or convergence in distribution of Markov chains to GBM, to the best of our knowledge, there is no past work that provides conditions on the spacing between grid points and the time-scaling factor to achieve moment-matching. We aim to fill this gap in the literature. We note that our study emphasizes the construction and effectiveness of the non-uniform Markov chain method regarding the approximation of GBM more than its applications to finance; however, we also briefly discuss application ideas such as regime-switching models.

The remainder of this paper is organized as follows. In Section~\ref{sec:related_works}, we provide an overview of related works. Then, in Section~\ref{sec:main}, we present our nonuniform grid Markov chain method and provide theoretical results. In Section~\ref{sec:application_heat}, we show how the heat diffusion process can be approximated with our method and its brief application. In Section~\ref{sec:application_GBM}, we demonstrate the application to approximating geometric Brownian motion (GBM) with numerical examples. Finally, in Section~\ref{sec:conclusion}, we conclude our paper and discuss our future works.

\section{Related Works}
\label{sec:related_works}
In this section, we provide comparisons of our work with existing literature on approximation of continuous processes, methods that use nonuniform grids, and lattice structures used in approximation. 

% \subsection{Approximation of GBM}

\subsection{Markov Chains for Approximation of Stochastic Processes}
Regarding the approximation of continuous-time stochastic processes, there are research that approximate different types of processes. Specifically, from the past literature, several researchers investigated the approximation of continuous-time stochastic processes by using discrete-time Markov chains. In particular, \cite{lefebvre2009first} analyzed a discrete version of the Ornstein-Uhlenbeck process by modeling it as a Markov chain, and derived expressions for the probability that the Markov chain first reaches one boundary before the other (i.e., first hitting place probability). Building on this approach, \cite{lefebvre2011first} further investigated the use of a Markov chain on a uniform grid that converges to a GBM when the spacing between the grid points and the interval between discrete-time steps approach zero. In \cite{lefebvre2011first}, the transition probabilities of the Markov chain are designed to reflect the drift and volatility of a GBM. Our approach is different from that of \cite{lefebvre2011first} in a few aspects.  First of all, while \cite{lefebvre2011first} focused on the convergence of a finite-state Markov chain to the GBM to analyze first hitting probabilities, our objective is the approximation of the log-return of GBM using Markov chains with \emph{infinite} state spaces.  We also note that \cite{lefebvre2011first} chose uniform grids for their analysis, whereas we consider nonuniform grids. Previously, \cite{frannek2012one} and \cite{frannek2014stochastic} used continuous-time Markov chains on nonuniform grids for approximation purposes; however, their goal was to approximate the fundamental solution of the heat equation. Our approach is targeted to develop more generalized discrete-time Markov chain framework that can be calibrated to continuous-time processes whose mean and variance grow linearly in time.

\subsection{Methods Using Nonuniform Grids}
Nonuniform grids have been used in finite-difference methods for approximating solutions of partial differential equations (see, e.g. \cite{thomas2013numerical}). The structure of a nonuniform grid offers several advantages, including computational efficiency and localized resolution refinement~\cite{jianchun1995high}. For instance, if one is interested in precise solutions in a certain region of the domain, the grid may be set to include many points in that region and fewer points in regions where precision is not required. Owing to these benefits, previous studies have employed nonuniform grids in their analyses. For example, Bodeau et al. used nonuniform grids to solve partial differential equations in finance~\cite{bodeau2000non}. Our paper takes advantage of the localized resolution refinement property of a nonuniform grid. In particular, we demonstrate that grid points can be selected to obtain approximations better than those obtained with a uniform grid with numerical examples in Section \ref{sec:application_GBM}.  

\subsection{Lattice Structures Used in Approximation}
Our proposed Markov chain that we use for approximation has a three-branch local structure. From any given state, the Markov chain can move to a lower state, stay at the same state, or move to a higher state. This is closely related to the trinomial lattice structure widely used in asset pricing. In quantitative finance, binomial and trinomial trees have long been employed to approximate GBM for option pricing. In binomial models, the asset price moves up or down at each time step, while trinomial models extend this by also allowing the price to remain unchanged. Classical examples include the Cox–Ross–Rubinstein model~\cite{cox1979option}, the Jarrow–Rudd model~\cite{jarrow1983option}, and the Tian tree model~\cite{tian1993modified}. More recent developments, such as trinomial Markov tree models with recombining nodes~\cite{xiaoping2014pricing}, offer faster convergence, higher accuracy, and improved computational efficiency. Furthermore, \cite{kim2016multi} proposes a unified binomial framework that matches all moments of GBM over finite time intervals. However, to the best of our knowledge, existing lattice-based approaches do not exploit nonuniform grids to improve empirical approximation quality.

% and our paper uses nonuniform grid, while \cite{lefebvre2011first} uses uniform grid. However, \cite{frannek2012one} uses continuous-time model for approximation, and their expectation value is 0. 

% To strengthen our approximation with respect to its practical applications, we define a infinite state Markov chain over a nonuniform grid, whereas \cite{frannek2012one}'s Markov chain was defined over a uniform grid. 

% In our particular setting, we design transition probabilities of a Markov chain to match the first two moments of the log-return of a GBM. Moreover, for evaluation of the effectiveness of our Markov chain model, we simulate its exponential form and compare the resulting paths with those of GBM. 

% in that we construct the Markov chain directly on a nonuniform grid and match the first two moments of the log return GBM. This allows for greater flexibility in capturing local behaviors of GBM and facilitates particle based simulation on irregular grids.

\section{Markov Chains on Nonuniform Grids with Time-Linear First Two Moments}\label{sec:main}
In this section, we first provide an overview of our notation, and then we present our main technical result that allows us to choose the transition probabilities of a nonuniformly-gridded Markov chain so that its mean and variance match given linear functions of time.

\subsection{Notation and Preliminaries}
We use $\mathbb{Z}$, $\mathbb{N}_0$, and $\mathbb{N}$ to denote the lists of all integers, nonnegative integers, and positive integers, respectively. In this paper, we define stochastic processes on a probability space with $\mathbb{P}$ denoting the probability measure, $\mathbb{E}[\cdot]$ denoting the expectation and $\mathrm{Var}[\cdot]$ denoting the variance.

\subsection{Markov Chain and Its Characterization}
First, we define a discrete-time Markov chain $\{r(k)\in \mathcal{X}\}_{k\in \mathbb N_0}$ on a nonuniform grid given by 
\begin{align}
    \mathcal{X}\coloneq\{x_i\in\mathbb R: i\in \mathbb{Z}\},
\end{align}
where $x_i\in \mathbb{R}$ are grid points that satisfy $x_i<x_{i+1}$ for $i\in \mathbb{Z}$ and $x_0=0$. 

Transition probabilities of the Markov chain $\{r(k)\}_{k\in \mathbb N_0}$ are  characterized as 
\begin{align}
    \mathbb{P}(r(k+1)=x_{j}\mid r(k)=x_i) = \begin{cases}
    \lambda_{i,\lL} , &\text{if}\,\, j=i-1, \\
    \lambda_{i,\lC} , &\text{if}\,\, j=i,   \\
    \lambda_{i,\lR} , &\text{if}\,\,j=i+1, \\
    0   , &\,\, \mathrm{otherwise},
    \end{cases}\label{Markov_lambda} 
\end{align}
and its initial distribution is characterized as $\mathbb{P}(r(0)=x_i)=\nu_i$, $i\in \mathbb{Z}$.

We are ready to state our main result, which provides transition probabilities of the Markov chain.

\begin{theorem} \label{thrm:markov}
Given $M,V\in \mathbb{R}$, if the inequalities
\begin{align}
     &M(x_{i+1}-x_i) \leq M^2 +V, \label{inequality}\\
     &-M(x_i-x_{i-1}) \leq M^2 +V, \label{inequality_1} \\
    &M^2+V+M(2x_i-x_{i+1}-x_{i-1}) \leq (x_{i+1}-x_i)(x_i-x_{i-1})\label{inequality_2} 
\end{align}
hold for each $i\in \mathbb{Z}$, then $\{r(k)\in\mathcal{X}\}_{k\in \mathbb N_0}$ with transition probabilities in~\eqref{Markov_lambda} where
    \begin{align}
     &\lambda_{i,\lL} = \frac{M^2+V-M(x_{i+1}-x_i)}{(x_{i+1}-x_{i-1})(x_i-x_{i-1})},\label{lambdaL}\\
     &\lambda_{i,\lR} = \frac{M^2+V+M(x_i-x_{i-1})}{(x_{i+1}-x_{i-1})(x_{i+1}-x_{i})},\label{lambdaR}\\
     &\lambda_{i,\lC} = 1-\frac{M^2+V+M(2x_i-x_{i+1}-x_{i-1})}{(x_{i+1}-x_i)(x_i-x_{i-1})},\label{lambdaC}
\end{align} and initial distribution $\nu_0=1$, $\nu_i=0$ for $i\neq0$, is a well-defined Markov chain, and satisfies 
\begin{align}
    \mathbb{E}[r(k)] = Mk,\quad
    \mathrm{Var}[r(k)] = Vk \label{thrm:exp_var}
\end{align}
for every $k\in\mathbb{N}_0$.
\end{theorem}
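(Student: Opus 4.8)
The plan is to prove the statement in two stages: first establishing that the proposed $\lambda_{i,\lL},\lambda_{i,\lC},\lambda_{i,\lR}$ define a genuine probability distribution at each state, and then verifying the moment identities by a tower-property argument. Throughout it will be convenient to abbreviate the local grid spacings as $a\coloneq x_i-x_{i-1}>0$ and $b\coloneq x_{i+1}-x_i>0$, so that $x_{i+1}-x_{i-1}=a+b$, $x_{i-1}=x_i-a$, and $x_{i+1}=x_i+b$.

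For well-definedness I would first note that all three denominators in \eqref{lambdaL}--\eqref{lambdaC} are strictly positive because the grid is strictly increasing. Non-negativity of $\lambda_{i,\lL}$, $\lambda_{i,\lR}$, and $\lambda_{i,\lC}$ then follows immediately from \eqref{inequality}, \eqref{inequality_1}, and \eqref{inequality_2}, respectively, since each inequality is precisely the condition that the corresponding numerator be non-negative (for $\lambda_{i,\lC}$, that the subtracted fraction not exceed $1$). It then remains to check $\lambda_{i,\lL}+\lambda_{i,\lC}+\lambda_{i,\lR}=1$, i.e.\ $\lambda_{i,\lL}+\lambda_{i,\lR}=1-\lambda_{i,\lC}$. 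Placing $\lambda_{i,\lL}+\lambda_{i,\lR}$ over the common denominator $(a+b)ab$, the numerator factors as $(a+b)\bigl[(M^2+V)+M(a-b)\bigr]$; cancelling $a+b$ and rewriting $a-b=2x_i-x_{i+1}-x_{i-1}$ yields exactly the fraction defining $1-\lambda_{i,\lC}$ in \eqref{lambdaC}.

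The heart of the argument is the observation that the one-step conditional moments are affine images of $x_i$ that do not otherwise depend on the state index. Conditioning on $r(k)=x_i$ and using $\sum\lambda=1$, I would compute the conditional mean as $\mathbb{E}[r(k+1)\mid r(k)=x_i]=x_i-a\lambda_{i,\lL}+b\lambda_{i,\lR}$. Substituting \eqref{lambdaL}--\eqref{lambdaR} collapses $-a\lambda_{i,\lL}+b\lambda_{i,\lR}$ to $M(a+b)/(a+b)=M$, giving the clean identity $\mathbb{E}[r(k+1)\mid r(k)=x_i]=x_i+M$. An entirely parallel computation for the second moment gives $\mathbb{E}[r(k+1)^2\mid r(k)=x_i]=x_i^2+2Mx_i+\bigl(a^2\lambda_{i,\lL}+b^2\lambda_{i,\lR}\bigr)$, and the bracketed term simplifies to $M^2+V$, so that $\mathbb{E}[r(k+1)^2\mid r(k)=x_i]=(x_i+M)^2+V$. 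The crucial feature is that the additive increments $M$ and $V$ are constant in $i$.

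Given these two identities, the moment recurrences follow by taking expectations via the tower property. Since the chain starts at $x_0=0$ and moves by at most one grid point per step, the support at time $k$ is the finite set $\{x_{-k},\dots,x_k\}$, so all moments are finite and no interchange-of-summation issues arise. For the mean, $\mathbb{E}[r(k+1)]=\mathbb{E}[r(k)]+M$ with $\mathbb{E}[r(0)]=0$ gives $\mathbb{E}[r(k)]=Mk$ by induction. For the second moment, $\mathbb{E}[r(k+1)^2]=\mathbb{E}[r(k)^2]+2M\,\mathbb{E}[r(k)]+M^2+V=\mathbb{E}[r(k)^2]+2M^2k+M^2+V$; solving this linear recurrence from $\mathbb{E}[r(0)^2]=0$ yields $\mathbb{E}[r(k)^2]=M^2k^2+Vk$, whence $\mathrm{Var}[r(k)]=M^2k^2+Vk-(Mk)^2=Vk$. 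The one genuinely delicate point is the algebraic simplification of the preceding paragraph: the whole result hinges on the numerators of $\lambda_{i,\lL}$ and $\lambda_{i,\lR}$ being tuned so that the grid-dependent factors $a$ and $b$ cancel in both $-a\lambda_{i,\lL}+b\lambda_{i,\lR}$ and $a^2\lambda_{i,\lL}+b^2\lambda_{i,\lR}$, and it is exactly this cancellation that decouples the moment recurrences from the nonuniform geometry of the grid.
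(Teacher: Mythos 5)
Your proposal is correct and follows essentially the same route as the paper: verify from \eqref{inequality}--\eqref{inequality_2} that the $\lambda$'s form a probability distribution, show that the one-step mean and second-moment increments are the state-independent constants $M$ and $M^2+V+2Mx_i$ respectively, and solve the resulting linear recurrences by induction. The only difference is presentational---the paper packages the tower-property step as a reindexed-summation lemma over the whole grid, whereas you condition on $r(k)=x_i$ directly (and your remark that the chain has finite support at each time $k$ is a small but welcome justification of the sum manipulations that the paper leaves implicit).
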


Theorem~\ref{thrm:markov} allows us to choose the transition probabilities $(\lambda_{i,\lL}, \lambda_{i,\lC}, \lambda_{i,\lR})$ of a discrete-time Markov chain so that its expectation and variance match  given linear functions of the discrete time index (i.e., $Mk$ and $Vk$). In this paper our goal is to use Theorem~\ref{thrm:markov} for approximating continuous-time processes with a physical time variable $t\in [0,\infty)$. To match the discrete time index $k$ with this physical time, we will consider a time-scaling factor in Sections~\ref{sec:application_heat} and \ref{sec:application_GBM}.

% In this paper, we investigate approximation of stochastic processes at a given time $\tau k$. Notice that, $k\in\mathbb{N}_0$ and $\tau\in \mathbb{R}$ $(\tau > 0)$ can be selected in multiple ways to obtain $\tau k$. The value of $\tau$ can be selected to ensure the inequalities \eqref{inequality}--\eqref{inequality_2} are satisfied. 
\begin{remark} 
Conditions \eqref{inequality}--\eqref{inequality_2} ensure that $V \geq 0$. To see this note that \eqref{inequality} and \eqref{inequality_2} imply $M(x_{i+1}-x_i)+M(2x_i-x_{i+1}-x_{i-1})\leq(x_{i+1}-x_i)(x_i-x_{i-1})$, which implies $M(x_i-x_{i-1})\leq(x_{i+1}-x_i)(x_i-x_{i-1})$ or equivalently 
\begin{align}
    M \leq (x_{i+1}-x_i). \label{ineql:R}
\end{align}
Furthermore, \eqref{inequality_1} and \eqref{inequality_2} imply $-M(x_{i+1}-x_i)\leq (x_{i+1}-x_i)(x_i-x_{i-1})$ or equivalently
\begin{align}
    -M \leq (x_i-x_{i-1}). \label{ineql:L}
\end{align}
Using \eqref{ineql:R} and \eqref{ineql:L}, we have if $M>0$, from \eqref{inequality}, $M\{(x_{i+1}-x_i)-M\}\leq V$ implies $V\geq0$. If $M\leq0$, from \eqref{inequality_1}, $-M\{(x_i-x_{i-1})+M\} \leq V$ leads to $V\geq0$.
\end{remark}
\begin{remark}
    One notable case in Theorem~\ref{thrm:markov} arises when the grid is uniform, meaning that the distance between adjacent states is constant, that is,
\begin{align*}
    x_{i+1}-x_i = h\,\,\text{for all}\,\,i \in \mathbb{Z},
\end{align*}
where $h>0$.
In this case, we have
 \begin{align*}
     \lambda_{i,\lL} = \frac{1}{2h^2}\left(M^2+V- h M\right), \quad \lambda_{i,\lR} = \frac{1}{2h^2}\left(M^2+V+ h M\right),\quad
     \lambda_{i,\lC} = 1-\frac{1}{h^2}\left(M^2+V\right).
\end{align*}
The transition probabilities do not depend on the index $i$ anymore, and they are constant. This leads to a decrease in the complexity of computation in simulations.
\end{remark}

\subsection{Proof of Theorem~\ref{thrm:markov}}
Proof of Theorem~\ref{thrm:markov} relies on the following lemma.
\begin{lemma}\label{lemma_1}
    For any $f: \mathcal{X} \to \mathbb{R}$, we have
    \begin{align}
        \sum_{i = -\infty}^{\infty}f(x_i)\mathbb{P}(r(k)=x_i)
        =\sum_{i = -\infty}^{\infty}(f(x_{i-1})\lambda_{i,L}+f(x_{i})\lambda_{i,C}+f(x_{i+1})\lambda_{i,R})\mathbb{P}(r(k-1)=x_i).\label{lemma_eq}
    \end{align}
\begin{proof}
By \eqref{Markov_lambda}, $\mathbb{P}(r(k)=x_i\mid r(k-1)=x_j)=0$ for $j\notin \{-1, 0, 1\}$. Thus, by Law of total probability, we obtain 
\begin{align}
    \mathbb{P}(r(k)=x_i)=\sum_{j=-1}^{1}\mathbb{P}(r(k)=x_i\mid r(k-1)=x_j)\mathbb{P}(r(k-1)=x_j),
\end{align}
which implies
\begin{align}
\sum_{i = -\infty}^{\infty} f(x_i)\mathbb{P}(r(k) = x_i)
&= \sum_{i = -\infty}^{\infty} f(x_i)\mathbb{P}(r(k) = x_{i} \mid r(k-1) = x_{i+1}) \mathbb{P}(r(k-1) = x_{i+1})\nonumber\\
&\quad+ \sum_{i = -\infty}^{\infty} f(x_i)\mathbb{P}(r(k) = x_{i} \mid r(k-1) = x_i) \mathbb{P}(r(k-1) = x_{i})\nonumber\\ 
&\quad+\sum_{i = -\infty}^{\infty} f(x_i)\mathbb{P}(r(k) = x_{i} \mid r(k-1) = x_{i-1}) \mathbb{P}(r(k-1) = x_{i-1}). \label{three-sums}
\end{align}
Since the ranges of the summations on the right-hand side of \eqref{three-sums} are from $-\infty$ to $+\infty$, the indices $i$ can be reparameterized. This results in
\begin{align}
\sum_{i = -\infty}^{\infty} f(x_i)\mathbb{P}(r(k) = x_i) &= \sum_{i = -\infty}^{\infty} f(x_{i-1})\mathbb{P}(r(k) = x_{i-1} \mid r(k-1) = x_i) \mathbb{P}(r(k-1) = x_{i})\nonumber\\
&\quad+ \sum_{i = -\infty}^{\infty} f(x_i)\mathbb{P}(r(k) = x_{i} \mid r(k-1) = x_i) \mathbb{P}(r(k-1) = x_{i})\nonumber\\ 
&\quad+\sum_{i = -\infty}^{\infty} f(x_{i+1})\mathbb{P}(r(k) = x_{i+1} \mid r(k-1) = x_i) \mathbb{P}(r(k-1) = x_{i}).
\end{align}
By~\eqref{Markov_lambda}, we can replace conditional probabilities with $\lambda_{i,\lL}, \lambda_{i,\lC}, \lambda_{i,\lR}$ to get
\begin{align}
\sum_{i = -\infty}^{\infty} f(x_i)\mathbb{P}(r(k) = x_i) &= \sum_{i = -\infty}^{\infty} f(x_{i-1})\lambda_{i,\lL} \mathbb{P}(r(k-1) = x_{i})
+ \sum_{i = -\infty}^{\infty} f(x_i)\lambda_{i,\lC}\mathbb{P}(r(k-1) = x_{i})\nonumber\\
&\quad\quad+\sum_{i = -\infty}^{\infty} f(x_{i+1})\lambda_{i,\lR}\mathbb{P}(r(k-1) = x_{i}),
\end{align}
which implies~\eqref{lemma_eq}.
\end{proof}
\end{lemma}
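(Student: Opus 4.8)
The plan is to begin from the left-hand side of \eqref{lemma_eq} and rewrite $\mathbb{P}(r(k)=x_i)$ using the law of total probability, conditioning on the state at the previous time step. Because the transition law in \eqref{Markov_lambda} only permits moves to adjacent grid points, the conditional probability $\mathbb{P}(r(k)=x_i\mid r(k-1)=x_j)$ vanishes unless $j\in\{i-1,i,i+1\}$. Hence the total-probability expansion collapses to exactly three surviving terms. After multiplying by $f(x_i)$ and summing over $i\in\mathbb{Z}$, the left-hand side becomes a sum of three bi-infinite series, indexed according to whether the predecessor state was $x_{i+1}$, $x_i$, or $x_{i-1}$.

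The second step is to reparameterize each of these three series so that the summation index tracks the predecessor (conditioning) state rather than the target state. Concretely, in the series whose predecessor is $x_{i+1}$ I would substitute $i\mapsto i-1$, and in the series whose predecessor is $x_{i-1}$ I would substitute $i\mapsto i+1$; the middle series requires no shift. Since each shift is a bijection of $\mathbb{Z}$, all three series are recast with the common conditioning index $x_i$, and the retained $f$-factors become $f(x_{i-1})$, $f(x_i)$, and $f(x_{i+1})$ respectively, each multiplied by the corresponding one-step conditional probability and by the common factor $\mathbb{P}(r(k-1)=x_i)$.

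The final step is to invoke \eqref{Markov_lambda} once more to replace the three one-step conditional probabilities by $\lambda_{i,\lL}$, $\lambda_{i,\lC}$, $\lambda_{i,\lR}$, and then recombine the three series into the single bi-infinite sum appearing on the right-hand side of \eqref{lemma_eq}. This yields the claimed identity.

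The main obstacle I anticipate is bookkeeping rather than anything analytic: the index shifts must be applied to the correct series and in the correct direction, so that the predecessor state is consistently relabeled as $x_i$ while each surviving $f$-factor carries the intended index. One should also note that reindexing a bi-infinite sum is a legitimate reparameterization precisely because $i\mapsto i\pm 1$ is a bijection of $\mathbb{Z}$; provided the series are absolutely convergent — which may be read as an implicit hypothesis on $f$, and is readily checked in the intended applications $f(x)=x$ and $f(x)=x^2$ — no genuine rearrangement issue arises.
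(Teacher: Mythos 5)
Your proposal is correct and follows essentially the same route as the paper's own proof: expand $\mathbb{P}(r(k)=x_i)$ by the law of total probability over the three admissible predecessor states, multiply by $f(x_i)$, sum over $i\in\mathbb{Z}$, shift the indices ($i\mapsto i-1$ and $i\mapsto i+1$) in the two off-diagonal series, and substitute $\lambda_{i,\lL},\lambda_{i,\lC},\lambda_{i,\lR}$ via \eqref{Markov_lambda}. Your added remark about absolute convergence justifying the reindexing of the bi-infinite sums is a point the paper leaves implicit, and it is a sensible precaution.
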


\vskip 10pt

\begin{proof}[Proof of Theorem~\ref{thrm:markov}] First of all, by using \eqref{inequality}--\eqref{inequality_2}, we obtain
\begin{align}
     &0\leq \frac{M^2+V-M(x_{i+1}-x_i)}{(x_{i+1}-x_{i-1})(x_i-x_{i-1})},\,\label{f-inequality}\\
     &0\leq \frac{M^2+V+M(x_i-x_{i-1})}{(x_{i+1}-x_{i-1})(x_{i+1}-x_{i})},\label{f-inequality1}\\
     &\frac{M^2+V+M(2x_i-x_{i+1}-x_{i-1})}{(x_{i+1}-x_i)(x_i-x_{i-1})}\leq 1. \label{f-inequality2}
\end{align} 
Notice that \eqref{lambdaL}--\eqref{lambdaC} together with \eqref{f-inequality}--\eqref{f-inequality2} imply
\begin{align}
    0 \leq\lambda_{i,L}\leq1,\,\, 0 \leq\lambda_{i,C}\leq1,\,\, 0\leq\lambda_{i,R} \leq1. \label{lambda-ineqs}
\end{align}
Furthermore, $\lambda_{i,\lL}+\lambda_{i,\lR}+\lambda_{i,\lC}=1$ holds by \eqref{lambdaL}--\eqref{lambdaC}. Thus, the Markov chain $\{r(k)\}_{k\in \mathbb{N}_0}$ is well defined. Now, by the definition of expectation, we get
\begin{align}
         \mathbb{E}[r(k)] &= \sum_{i =-\infty}^{\infty}x_i\mathbb{P}(r(k)=x_i).
\end{align}
Here, by using~Lemma~\ref{lemma_1} with $f(x_i)=x_i$, we obtain
\begin{align}
    \sum_{i =-\infty}^{\infty}(x_{i-1}\lambda_{i,L}+x_{i}\lambda_{i,C}+x_{i+1}\lambda_{i,R})\mathbb{P}(r(k-1)=x_i).
\end{align}
Replacing $\lambda_{i,C}$ with $1-\lambda_{i,L}-\lambda_{i,R}$ leads to 
    \begin{align}
         \mathbb{E}[r(k)] =\sum_{i =-\infty}^{\infty}\left\{(x_{i-1}-x_i)\lambda_{i,L}+(x_{i+1}-x_{i})\lambda_{i,R}+x_i\right\}\mathbb{P}(r(k-1)=x_i).\label{expt_last}
    \end{align}
Substituting the expressions for $\lambda_{i,L}$ and $\lambda_{i,R}$ from \eqref{lambdaL} and \eqref{lambdaR}, we arrive at
\begin{align}
    \mathbb{E}[r(k)] &= \sum_{i =-\infty}^{\infty}\left\{x_i+\frac{(x_{i+1}-x_i)M+(x_i-x_{i-1})M}{(x_{i+1}-x_{i-1})}\right\}\mathbb{P}(r(k-1)=x_i) \nonumber \\
    &= \sum_{i =-\infty}^{\infty}(x_i+M)\mathbb{P}(r(k-1)=x_i)\nonumber\\
    &= \sum_{i =-\infty}^{\infty}x_i\mathbb{P}(r(k-1)=x_i)+M\sum_{i =-\infty}^{\infty}\mathbb{P}(r(k-1)=x_i)\nonumber\\
    &= \mathbb{E}[(r(k-1))]+M.\label{expt_before}
\end{align}
By using mathematical induction on \eqref{expt_before} and $r(0)=0$, we conclude
\begin{align}
    \mathbb{E}[r(k)] = \mathbb{E}[r(0)] + M \cdot k
    = M k\label{expt_final},
\end{align}
which implies~\eqref{thrm:exp_var}.

We now turn to the variance. First, we evaluate
\begin{align}
     \mathbb{E}[r^2(k)] &=\sum_{i =-\infty}^{\infty}x_i^2\mathbb{P}(r(k)=x_i).\label{def_ex^2}
\end{align}
Using Lemma~\ref{lemma_1} with $f(x_i)=x_i^2$, we transform equation~\eqref{def_ex^2} into
\begin{align}
    \mathbb{E}[r^2(k)] = \sum_{i =-\infty}^{\infty}(x^2_{i-1}\lambda_{i,L}+x^2_{i+1}\lambda_{i,R}+x^2_{i}\lambda_{i,C})\mathbb{P}(r(k-1)=x_i). \label{after-second-use-of-lemma1}
\end{align}
Since, $\lambda_{i,C}=1-\lambda_{i,L}-\lambda_{i,R}$, it follows from \eqref{after-second-use-of-lemma1} that
\begin{align*}
    \mathbb{E}[r^2(k)]=\sum_{i =-\infty}^{\infty}\left\{(x_{i-1}^2-x_i^2)\lambda_{i,L}+(x_{i+1}^2-x_{i}^2)\lambda_{i,R}+x_i^2\right\}\mathbb{P}(r(k-1)=x_i). 
\end{align*}
Now, we use the expressions for $\lambda_{i,L}$ and $\lambda_{i,R}$ in this equation to obtain
\begin{align}
    \mathbb{E}[r^2(k)]&=\sum_{i =-\infty}^{\infty}\left\{\frac{(x_{i+1}-x_{i-1})(M^2+V+2x_iM)}{(x_{i+1}-x_{i-1})}+x_i^2\right\}\mathbb{P}(r(k-1)=x_i)\nonumber \\
    &=\sum_{i =-\infty}^{\infty}\left(M^2+V+2x_i M+x_i^2\right)\mathbb{P}(r(k-1)=x_i) \nonumber \\
    &=M^2+V+2M\sum_{i =-\infty}^{\infty}x_i\mathbb{P}(r(k-1)=x_i)+\sum_{i =-\infty}^{\infty}x_i^2\mathbb{P}(r(k-1)=x_i)\nonumber\\
     &=M^2+V+2M\cdot\mathbb{E}[r(k-1)]+\mathbb{E}[r^2(k-1)]. \label{exp-r2-relation}
\end{align}
Since $\mathbb{E}[r(k-1)]=M(k-1)$ by~\eqref{expt_final}, it follows from \eqref{exp-r2-relation} that
\begin{align}
    \mathbb{E}[r^2(k)] =M^2+V+2M^2(k-1)+\mathbb{E}[r^2(k-1)].\label{expt^2_last2}
\end{align}
This is a recurrence relation. Thus, by induction, we get
\begin{align}
    \mathbb{E}[r^2(k)] = \left(M^2+V\right)\cdot k+2\frac{M^2(k-1)}{2}\cdot k
    = (M)^2 k^2+V k . \label{expt^2_final}
\end{align}
Finally, by using the identity $\mathrm{Var}[r(k)] =\mathbb{E}[r^2(k)] - \mathbb{E}[r(k)]^2$, as well as with~\eqref{expt_final} and~\eqref{expt^2_final}, we calculate
\begin{align}
    \mathrm{Var}[r(k)] &=\mathbb{E}[r^2(k)] - \mathbb{E}[r(k)]^2 =M^2 k^2+Vk- M^2 k^2= Vk \label{Var_final}
\end{align}
confirming~\eqref{thrm:exp_var}.
\end{proof}
\section{Application to Approximating Stochastic Heat Diffusion}\label{sec:application_heat}
In this section, we demonstrate how the proposed Markov chain framework can be used to approximate stochastic heat diffusion in one spatial dimension.

\subsection{Approximation}
Among the continuous processes, heat diffusion has been considered significant, as it is used in wide range of science and engineering fields \cite{stocker1989design}. Heat diffusion benefits substantially from discretization because the analytical intractability of its governing partial differential equation makes a discrete-time approximation particularly advantageous for numerical analysis and simulation.

Heat diffusion is modeled by the equation
\begin{align}
    \frac{\partial u(t,x)}{\partial t} = \alpha \Delta u(t,x), 
\end{align}
where $u(t,x)$ represents the temperature at time $t$ at position $x$, $\alpha > 0$ represents the diffusivity constant, and $\Delta$ represents the Laplace operator. In this paper, we consider the one-dimensional case with $x\in \mathbb{R}$, which results in the one dimensional equation
\begin{align}
    \frac{\partial u}{\partial t} = \alpha \frac{\partial^2 u}{\partial x^2}.
\end{align} 
The solution to this heat equation is given by
\begin{align}
    u(t,x) = \frac{1}{(4\pi\alpha t)^{\frac{1}{2}}}e^{-\frac{x^2}{4\alpha t}}. \label{pdf-heat}
\end{align}
Notice that this solution is also the probability density function of Normal distribution with mean $0$ and variance $2\alpha t$ \cite{frannek2012one}. Stochastic heat diffusion considers heat particles at a certain time $t$ as realizations of a random variable with this distribution. Relation between heat equation and Normal distribution also enabled  researchers to establish a connection between random walks and heat diffusion (see, e.g., \cite{lawler2010random,frannek2012one} and the references therein), where random movement of heat particles are used for approximating the solution to heat diffusion.

We use a Markov chain defined on a nonuniform-grid to approximate the movement of these heat particles. Our approach is the discrete-time analogue of the continuous-time approach presented in \cite{frannek2012one}.
Specifically, in the following result, we provide transition probabilities of a discrete-time Markov chain that exactly matches the mean and variance of a heat particle whose probability distribution at time $t=k\tau$ is characterized by the probability density function in \eqref{pdf-heat}. Here we introduce a time-scaling factor $\tau$. The physical time of the continuous process is a linear function of the discrete time index of the Markov chain that we introduce (i.e., $t=k\tau$). 

% approximation will be done using one dimensional nonuniform grid. To this end, 
% For the approximation of heat diffusion, we aim to set our Markov chain in Theorem~\ref{thrm:markov} to match the mean and variance of Gaussian distribution,

% , where $x$ is position of particle and time $t$ \cite{berline2003heat}. This
% is also known as 1-dimensional heat kernel or fundamental solution of the heat equation, satisfying mean 0 and variance $2\alpha t$ \cite{frannek2012one}.
\begin{corollary} \label{corollary:Heat} If the inequality
\begin{align}
    2\alpha\tau \leq (x_{i+1}-x_i)(x_i-x_{i-1}) \label{inequality_H}
\end{align}
holds for each $i\in \mathbb{Z}$, then $\{r(k)\in \mathcal{X}\}_{k\in \mathbb N_0}$ with transition probabilities in~\eqref{Markov_lambda} where
    \begin{align}
     &\lambda_{i,\lL} = \frac{2\alpha\tau}{(x_{i+1}-x_{i-1})(x_i-x_{i-1})},\label{corollary_H:lambdaL}\\
     &\lambda_{i,\lR} = \frac{2\alpha\tau}{(x_{i+1}-x_{i-1})(x_{i+1}-x_{i})},\label{corollary_H:lambdaR}\\
     &\lambda_{i,\lC} = 1-\frac{2\alpha\tau}{(x_{i+1}-x_i)(x_i-x_{i-1})},\label{corollary_H:lambdaC}
\end{align} and initial distribution $\nu_0=1$, $\nu_i=0$ for $i\neq0$, is a well-defined Markov chain, and satisfies 
\begin{align}
    \mathbb{E}[r(k)] = 0,\quad
    \mathrm{Var}[r(k)] = 2\alpha k\tau,
\end{align}
for every $k\in\mathbb{N}_0$.
\end{corollary}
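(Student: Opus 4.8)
The plan is to obtain Corollary~\ref{corollary:Heat} as the special case of Theorem~\ref{thrm:markov} in which the mean coefficient vanishes and the variance coefficient equals $2\alpha\tau$. Concretely, I would set $M = 0$ and $V = 2\alpha\tau$ in the theorem and verify that all of its hypotheses and conclusions specialize exactly to the statements of the corollary, so that nothing beyond invoking Theorem~\ref{thrm:markov} is required.

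First I would check the three inequalities \eqref{inequality}--\eqref{inequality_2}. With $M = 0$, inequalities \eqref{inequality} and \eqref{inequality_1} both collapse to $0 \leq 2\alpha\tau$, which holds automatically since $\alpha > 0$ and $\tau > 0$; thus these two conditions impose no constraint in the heat-diffusion setting. The remaining inequality \eqref{inequality_2}, again with $M = 0$ and $V = 2\alpha\tau$, reduces to $2\alpha\tau \leq (x_{i+1}-x_i)(x_i-x_{i-1})$, which is precisely the hypothesis \eqref{inequality_H} of the corollary. Hence the single assumption \eqref{inequality_H}, imposed for every $i \in \mathbb{Z}$, guarantees that the full set of hypotheses of Theorem~\ref{thrm:markov} is met.

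Next I would substitute $M = 0$ and $V = 2\alpha\tau$ into the transition-probability formulas \eqref{lambdaL}--\eqref{lambdaC}. Every term carrying a factor of $M$ drops out, so $M^2 + V$ becomes $2\alpha\tau$ throughout, and the expressions simplify directly to \eqref{corollary_H:lambdaL}--\eqref{corollary_H:lambdaC}. Together with the initial distribution $\nu_0 = 1$, $\nu_i = 0$ for $i \neq 0$, which is inherited unchanged, Theorem~\ref{thrm:markov} then certifies that $\{r(k)\}_{k\in\mathbb{N}_0}$ is a well-defined Markov chain. Finally, the moment conclusions \eqref{thrm:exp_var} yield $\mathbb{E}[r(k)] = Mk = 0$ and $\mathrm{Var}[r(k)] = Vk = 2\alpha k\tau$ for every $k \in \mathbb{N}_0$, completing the argument.

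Since the entire proof is a specialization, there is no genuine technical obstacle; the only point requiring care is confirming that the vanishing of $M$ renders \eqref{inequality} and \eqref{inequality_1} vacuous, so that \eqref{inequality_H} alone suffices. This in turn relies on the standing positivity $\alpha > 0$ together with $\tau > 0$ for the time-scaling factor.
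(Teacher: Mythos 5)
Your proposal is correct and coincides with the paper's own proof, which likewise obtains the corollary by invoking Theorem~\ref{thrm:markov} with $M=0$ and $V=2\alpha\tau$ and observing that \eqref{inequality_H} together with $\alpha>0$, $\tau>0$ yields \eqref{inequality}--\eqref{inequality_2}. Your write-up merely spells out the specialization in slightly more detail than the paper does.
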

\begin{proof}[Proof of Corollary \ref{corollary:Heat}]
This result is a consequence of Theorem\ref{thrm:markov} with $M=0$, $V=2\alpha\tau$, because \eqref{inequality_H} implies \eqref{inequality}, \eqref{inequality_1}, \eqref{inequality_2}, as we have $\alpha>0$ and $\tau >0$.
\end{proof}

Since the heat diffusion has mean 0, the inequalities~\eqref{corollary:Heat} are simplified compared to Theorem \ref{thrm:markov} and Corollary \ref{corollary:GBM}. To match the first two moments of heat diffusion, scaling factor $\tau$ and grid points should be appropriately chosen so that it satisfies the inequality in \eqref{inequality_H} considering the given diffusivity constant $\alpha$. 

\subsection{Markov Chain Approximation of Temperature Evolution}
As an application of our results, we are interested in how the temperature evolves around a certain set of points in one dimensional space
\begin{align}
    p_1, p_2, \ldots, p_m \in \mathbb{R},
\end{align} where $m \in \mathbb{N}$.  We want to know the temperature values at times $0, \tau, 2\tau, \ldots, n\tau$ with $n \in \mathbb{N}$. To this end, we construct a spatial grid 
\begin{align}
    \Tilde{\mathcal{X}} \triangleq \{x_{-n}, \cdots, x_{-1}, x_0, x_{1}, \cdots, x_n \},
\end{align} such that the set of points of interest satisfies $\mathcal{P} \triangleq \{p_1, p_2, \ldots, p_m \}\subset \Tilde{\mathcal{X}}$.

To approximate the heat, we can consider a finite-state Markov chain $\{r(k)\in \Tilde{\mathcal{X}}\}$ with initial distribution given by
\begin{align}
    \Tilde{\nu} = [\underbrace{0 \ldots 0}_{\text{n terms}} 1 \underbrace{0 \ldots 0}_{\text{n terms}}],
\end{align} 
and the transition probability matrix given as
\[
\Tilde{P} = 
\begin{pmatrix}
1 & 0 & 0 & \cdots & 0 \\[4pt]
\lambda_{-(n-1),L} & \lambda_{-(n-1),C} & \lambda_{-(n-1),R} & \ddots & \vdots \\[4pt]
0 & \ddots & \ddots & \ddots & 0 \\[4pt]
\vdots & \ddots & \lambda_{n-1,L} & \lambda_{n-1,C} & \lambda_{n-1,R} \\[4pt]
0 & \cdots  & 0 & 0 & 1
\end{pmatrix}
\]
so that 
\begin{align}
(\Tilde{\nu} \Tilde{P}^k)_{i+n+1} = \mathbb{P}[r(k)=x_i],\quad k \in \{0,1, \dots ,n\}, \quad i\in \{-n, \ldots, 0, \ldots, n\},   
\end{align}
where $(\Tilde{\nu} \Tilde{P}^k)_{i+n+1}$ corresponds to $(i+n+1)$th entry of vector $\Tilde{\nu} \Tilde{P}^k$.  We use Corollary~\ref{corollary:Heat} to choose $\lambda_{i,\lL}, \lambda_{i,\lC}, \lambda_{i,\lR}$. Corollary~\ref{corollary:Heat} is for infinite state Markov chains for infinite durations. In this practical example, we consider finite time steps. Therefore, we consider a truncated version of the Markov chain that goes from $x_{-n}$ to $x_{n}$. Notice that within $n$ time steps, starting from $x_0$, the furthest values that the Markov chain can reach are $x_{-n}$ and $x_{n}$.

% \begin{remark}
%    In finite time $0\leq n\tau \leq T$, the particles 
% \end{remark}

This construction provides a discrete-time approximation of heat diffusion through a Markov chain defined on the spatial grid $\Tilde{\mathcal{X}}$. The initial distribution represents the origin of the concentrated heat, and transition probability matrix $\Tilde{P}$ encodes how heat spreads to neighboring points over each time increment $\tau$. Iteration of $\Tilde{P}$ mimics the diffusive spreading mechanism of the heat equation. As a result, the distribution of $\Tilde{\nu} \Tilde{P}^k$ represents  an approximated temperature profile at time $k\tau$, and the temperature at the points of interest $p_j \in \mathcal{P}$ can be observed with the corresponding entries of this vector. Furthermore, in terms of the spatial grid, as $n$ increases by making the grid finer, the approximation gets more precise.

\section{Application to Approximating Geometric Brownian Motion}\label{sec:application_GBM}

In this section, we present the approximation of log-return of geometric Brownian motion as an application of our approximation method in Theorem\ref{thrm:markov}. The efficacy of our approximation method and the use of a nonuniform grid will be illustrated through a numerical simulation.

\subsection{Application}
Geometric Brownian motion (GBM) is extensively used in finance, particularly for modeling stock prices, pricing derivatives, and other assets~\cite{farida2018stock,lefebvre2007applied}. As such, GBM has been a fundamental concept in financial mathematics. The probability distribution of GBM is described by the Black--Scholes partial differential equation, and although several numerical methods exist for approximating this equation's solution (see, e.g., \cite{cen2011robust,heider2010numerical,zadeh2025interval}), directly approximating GBM by a discrete stochastic model offers unique benefits, particularly in leveraging GPU for parallel computation in simulations. We believe that our approximation method can be useful in applications in finance, physics, engineering, and mathematical biology where geometric Brownian motion is used as a modeling tool (see, e.g., \cite{fattahi2022modeling,giordano2023infinite,lee2022geometric,di2018modeling,mishura2021discrete}).

Using our Markov chain model, we approximate the log-return of GBM (i.e., $\ln(s_{k\tau })$) with $r(k)$  by matching their first two moments, expectation and variance. The time-scaling factor, $\tau$ is introduced to allow approximation not only on discrete time steps, but at any given time $k\tau $.
 Geometric Brownian motion with drift and volatility coefficients $\mu$ and $\sigma^2$ is denoted with the continuous-time stochastic process $\{s_t\in\mathbb{R}\}_{t\geq0}$ that satisfies $\mathrm{d}s_t = \mu s_t\mathrm{d}t+\sigma s_t\mathrm{d}w_t$, where $s_0>0$ is a fixed constant and $\{w_t\in \mathbb{R}\}_{t\geq0}$ is the Wiener process~\cite{krylov2002introduction}. Note that $\mathbb{E}\left[\ln\left(s_{t}/s_0\right)\right] = \left(\mu-\sigma^2/2\right)t$ and
    $\mathrm{Var}\left[\ln\left(s_{t}/s_0\right)\right] = \sigma^2t$.
Therefore, if we incorporate a time-scaling factor $\tau>0$, for every $k\in \mathbb{N}_0$, we get
\begin{align}
    \mathbb{E}\left[\ln\left(\frac{s_{k\tau}}{s_0}\right)\right] = \left(\mu-\frac{\sigma^2}{2}\right)k\tau ,\quad
    \mathrm{Var}\left[\ln\left(\frac{s_{k\tau}}{s_0}\right)\right] = \sigma^2 k\tau .\label{expec-var-scale}
\end{align}

\begin{corollary} \label{corollary:GBM}
Let $\eta \coloneq\mu-\frac{\sigma^2}{2}$ and $\tau>0$. If the inequalities
\begin{align}
     &\fg(x_{i+1}-x_i) \leq \fgsq +\st, \label{corollary:ineql}\\
     &-\fg(x_i-x_{i-1}) \leq \fgsq +\st, \label{corollary:ineql_1}\\
    &\fgsq+\st+\fg(2x_i-x_{i+1}-x_{i-1}) \leq (x_{i+1}-x_i)(x_i-x_{i-1})\label{corollary:ineql_2}
\end{align}
hold for each $i\in \mathbb{Z}$, then $\{r(k)\in \mathcal{X}\}_{k\in \mathbb N_0}$ with transition probabilities in~\eqref{Markov_lambda} where
    \begin{align}
     &\lambda_{i,\lL} = \frac{\fgsq+\st-\fg(x_{i+1}-x_i)}{(x_{i+1}-x_{i-1})(x_i-x_{i-1})},\label{corollary:lambdaL}\\
     &\lambda_{i,\lR} = \frac{\fgsq+\st+\fg(x_i-x_{i-1})}{(x_{i+1}-x_{i-1})(x_{i+1}-x_{i})},\label{corollary:lambdaR}\\
     &\lambda_{i,\lC} = 1-\frac{\fgsq+\st+\fg(2x_i-x_{i+1}-x_{i-1})}{(x_{i+1}-x_i)(x_i-x_{i-1})},\label{corollary:lambdaC}
\end{align} and initial distribution $\nu_0=1$, $\nu_i=0$ for $i\neq0$, is a well-defined Markov chain, and satisfies 
\begin{align}
    \mathbb{E}[r(k)] = \mathbb{E}\left[\ln\left(\frac{s_{k\tau}}{s_0}\right)\right],\quad
    \mathrm{Var}[r(k)] = \mathrm{Var}\left[\ln\left(\frac{s_{k\tau}}{s_0}\right)\right],\label{thrm1_exp_var}
\end{align}
for every $k\in\mathbb{N}_0$.
\end{corollary}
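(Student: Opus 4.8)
The plan is to obtain Corollary~\ref{corollary:GBM} as a direct specialization of Theorem~\ref{thrm:markov}, in exactly the same manner that Corollary~\ref{corollary:Heat} was derived. First I would pin down the correct parameter values by comparing target moments. Theorem~\ref{thrm:markov} produces a chain satisfying $\mathbb{E}[r(k)]=Mk$ and $\mathrm{Var}[r(k)]=Vk$, while the scaled GBM log-return moments in~\eqref{expec-var-scale} are $(\mu-\sigma^2/2)k\tau=\eta k\tau$ and $\sigma^2 k\tau$. Equating the linear-in-$k$ forms therefore forces the choice $M=\fg$ and $V=\st$.

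Next I would verify that this substitution carries every hypothesis and every formula of the theorem onto the corresponding statement of the corollary. Since $M=\fg$ gives $M^2=\fgsq$, the three hypotheses~\eqref{inequality}--\eqref{inequality_2} become verbatim~\eqref{corollary:ineql}--\eqref{corollary:ineql_2}, and the transition-probability expressions~\eqref{lambdaL}--\eqref{lambdaC} become verbatim~\eqref{corollary:lambdaL}--\eqref{corollary:lambdaC}. As the initial distribution is identical ($\nu_0=1$, $\nu_i=0$ for $i\neq 0$), all hypotheses of Theorem~\ref{thrm:markov} are met, so the chain is well defined and its conclusion applies directly. Finally I would translate that conclusion back: $\mathbb{E}[r(k)]=\fg k=(\mu-\sigma^2/2)k\tau=\mathbb{E}[\ln(s_{k\tau}/s_0)]$ and $\mathrm{Var}[r(k)]=\st k=\sigma^2 k\tau=\mathrm{Var}[\ln(s_{k\tau}/s_0)]$, which is precisely~\eqref{thrm1_exp_var}.

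The main obstacle is essentially nonexistent; the argument is a syntactic matching once the parameter identification is fixed. The only step warranting genuine (if minor) care is confirming that $M=\fg$ and $V=\st$ are indeed the values that align the theorem's moments, which grow linearly in the discrete index $k$, with the continuous process's moments in~\eqref{expec-var-scale}, which grow linearly in the physical time $k\tau$; the time-scaling factor $\tau$ is exactly what reconciles the two. With that identification in place, every inequality and every probability formula coincides by inspection, and no further computation is required.
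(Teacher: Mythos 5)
Your proposal is correct and matches the paper's own proof, which likewise obtains the corollary by invoking Theorem~\ref{thrm:markov} with $M=\fg$ and $V=\st$ and identifying the resulting moments with those of $\ln(s_{k\tau}/s_0)$ via~\eqref{expec-var-scale}. No gaps.
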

\begin{proof}
This result is a consequence of Theorem\ref{thrm:markov} with $M=\eta\tau$ and $V=\sigma^2\tau$.
\end{proof}

\begin{remark}
Notice that for each state $x_i$ of the Markov chain in Corollary~\ref{corollary:GBM}, the next possible states are $x_{i-1}, x_i,x_{i+1}$. This structure is similar to the trinomial lattice structure often used in asset pricing models. We remark that approximation of GBM using binomial and trinomial models have been investigated in finance field for option pricing. In binomial models, the evolution of the price of an asset is modeled to move up or down between the points of a grid. As an extension of the binomial models, trinomial models also allow prices to remain constant. Existing binomial/trinomial models include the traditional ones such as the Cox-Ross-Rubinstein model \cite{cox1979option}, the Jarrow-Rudd model  \cite{jarrow1983option}, and the Tian tree model \cite{tian1993modified}, as well as the more recent trinomial Markov tree model with recombining nodes \cite{xiaoping2014pricing}, which allows faster convergence, higher accuracy, and efficient computation. Recently, \cite{kim2016multi} provided a unified framework of binomial models that matches all moments of GBM in finite time intervals. However, to the best of our knowledge utilization of nonuniform grids for better empirical performance have not been explored in the literature. 
\end{remark}

\subsection{Numerical Example}
In this section, we consider the problem of approximating log-return of GBM with coefficients $\mu=2$ and $\sigma^2=0.25$. Following the method that we presented in the previous section, we use a Markov chain defined on grid points 
\begin{align}
    x_i=\begin{cases}
        ci, \,\, &\text{if}\,\, i \geq 0, \\
        10ci, \,\, &\text{if}\,\, i < 0, \\
    \end{cases} \label{simul_grid}
\end{align}
for $i\in \mathbb{Z}$, where $c=0.01$ is fixed constant. Notice that the grid points on the right-side of the real line are set to have a finer resolution. This is to capture the behavior of GBM more closely, as the log-return tends to be positive with the selected drift coefficient $\mu=2$.

Note that the time-scaling factor $\tau=0.0002$ satisfies the inequalities~\eqref{corollary:ineql}--\eqref{corollary:ineql_2}. Therefore, by Corollary~\ref{corollary:GBM}, transition probabilities 
in \eqref{corollary:lambdaL}--\eqref{corollary:lambdaC} guarantee that the first two moments of $r(k)$ and $\ln(s_{k\tau}/s_0)$ match exactly, as in \eqref{thrm1_exp_var}. Using the transition probabilities derived in \eqref{corollary:lambdaL}--\eqref{corollary:lambdaC}, we generate $N=10000$ realizations of the Markov chain $\{r(k)\in \mathcal{X}\}_{k\in{\mathbb{N}_0}}$ for $k\in\{0,1,\ldots,10000\}$. Figure~\ref{fig:histogram} shows a histogram of $r(10000)$. We observe that it closely resembles the probability density function of the log-return at time $k\tau=2$.

Approximation of GBM with the Markov chain can be conceptually understood by $r(k) \approx \ln(s_{k\tau} / s_0)$, but in this section, we also assess the performance of the approximation $ s_0 e^{r(k)}\approx s_{k\tau}$. Specifically, Figure~\ref{fig:trajectories} shows 20 sample trajectories of  $s_0 e^{r(k)}$ as well as average of $N=10000$ sample trajectories. We note that the average trajectory closely matches $\mathbb{E}[s_{k\tau}]$. 

We remark that other time-scaling factors ($\tau$) can be used for approximation as long as the conditions of Corollary~\ref{corollary:GBM} are satisfied. For smaller values of $\tau$, obtaining approximations of the distribution of the log-return at a fixed time $k\tau$ take shorter, since trajectories of $r(k)$ need to be obtained for smaller $k$ values. To further reduce the simulation time, simulation of different sample trajectories of $r(k)$ can be carried out at different processing units in parallel. 

% TODO: Run simulation again

\begin{figure}[t]
    \centering
    \includegraphics[scale=0.5]{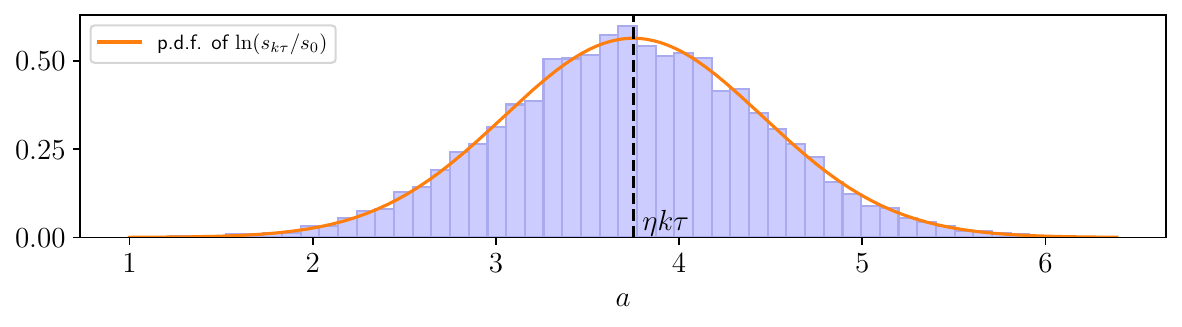} 
    \caption{Comparison of the probability density function of $\ln(s_{k\tau}/s_0)$ for $k\tau=2$ and histogram of $r(k)$ for $k=10000$. }\label{fig:histogram}
\end{figure}

\begin{figure}[t]
    \centering 
    \includegraphics[scale=0.5]{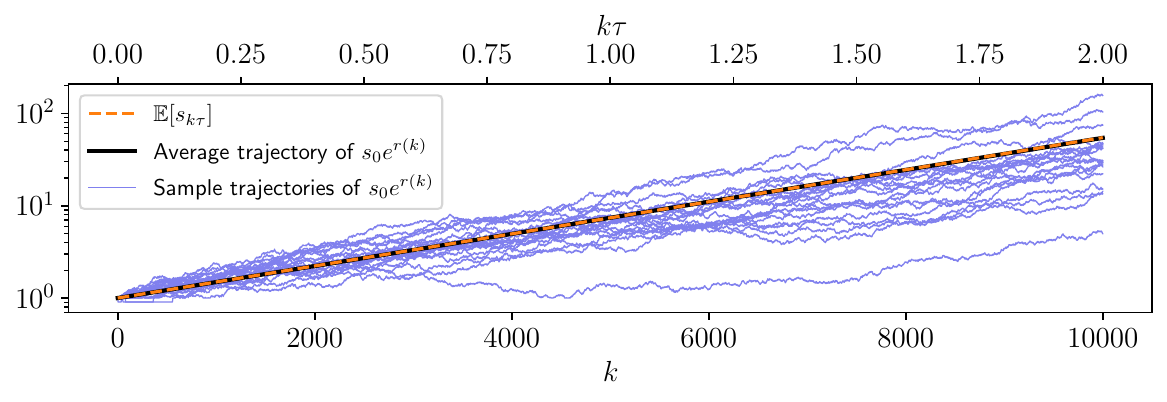} 
    \caption{Sample and average trajectories of $s_0 e^{r(k)}$ compared with $\mathbb{E}[s_{k\tau}]$.}\label{fig:trajectories}
\end{figure}

\begin{figure}[t]
    \centering 
    \includegraphics[scale=1.2]{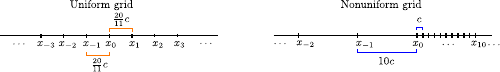} 
    \caption{Uniform and nonuniform grids with equal point density}\label{fig:wasserstein}
\end{figure}
\subsection{A Quantitative Comparison of Nonuniform and Uniform Grids}
Next, we compare whether the proposed Markov chain approximation works better on the nonuniform grid in \eqref{simul_grid} or a uniform grid of equal point density with points given as 
\begin{align}
    x_i= (20/11) \times ci, \quad i\in\mathbb Z,\label{uni_simul_grid}
\end{align}
where $c=0.01$ is the fixed constant from \eqref{simul_grid}. For this comparison, we investigate the Wasserstein $1$-distance from the distribution of the log-return. In particular, we use numerical integration to calculate
\begin{align*}
    W_1 (k)=\int_0^1\left|F_{1,k}^{-1}(q)-F_{2,k}^{-1}(q)\right|\mathrm{d}q,
\end{align*}
where $F_{1,k}^{-1}$ represents the empirical quantile function for $N=10000$ realizations of $r(k)$ and $F_{2,k}^{-1}$ represents the quantile function of the distribution of $\ln(s_{k\tau} / s_0)$ (i.e., normal distribution with mean $(\mu-\sigma^2/2)(k\tau)$ an variance $\sigma^2(k\tau)$). 

In Figure~\ref{fig:wasserstein}, we show the Wasserstein $1$-distance $W_1(k)$ obtained for $k\in\{1,\ldots,10000\}$ when we use a uniform grid and a nonuniform grid. We observe that for large values of $k$, Wasserstein $1$-distance obtained with the nonuniform grid is smaller than that obtained with the uniform grid. This is expected because when $t$ is large, $\ln(s_{k\tau} / s_0)$ is expected to take positive values due to $\eta$ being positive. Notice that in such a case, finer resolution on the positive side of the nonuniform grid allows better approximation. On the other hand, when $k$ is small, the distribution of the log-return $\ln(s_{k\tau} / s_0)$ is centered close to the $0$ value and spreads to both negative and positive values. For small $k$, the uniform grid being symmetric around $0$ allows a better representation, and thus a symmetric looking histogram (see top-left in Figure~\ref{fig:two-histograms}) and achieves a smaller Wasserstein $1$-distance value (see Figure \ref{fig:wasserstein}). For larger $k$, histograms are similar (see bottom plots in Figure~\ref{fig:two-histograms}) and the nonuniform grid achieves consistently smaller Wasserstein $1$-distance values.(Figure~\ref{fig:wasserstein}).

\begin{figure}[t]
    \centering 
    \includegraphics[scale=0.5]{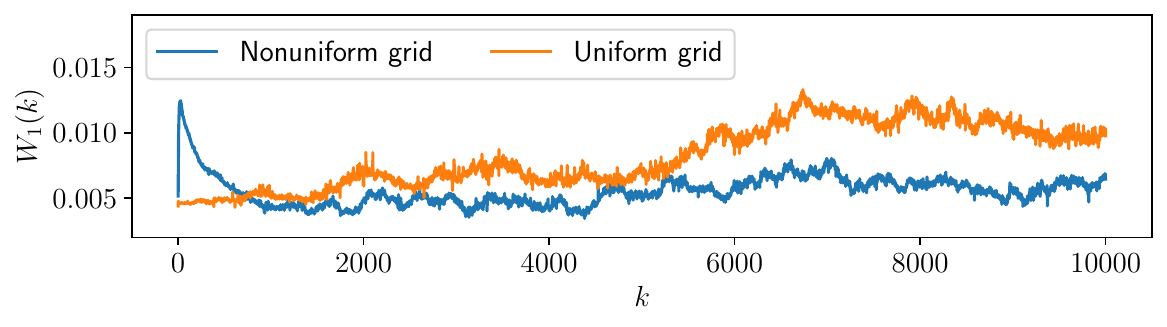}
    \centering
    \includegraphics[scale=0.5]{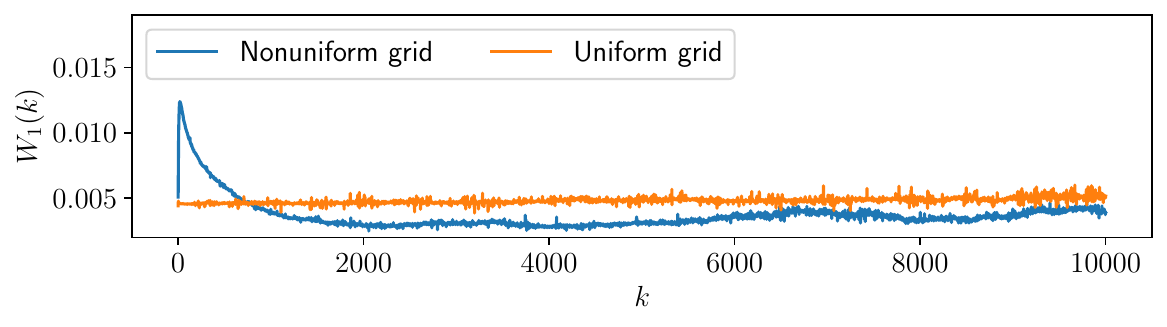}
    \caption{Comparison of Wasserstein $1$-distances obtained with uniform and nonuniform grids. Top: 10000 simulations, Bottom: 100000 simulations. }\label{fig:wasserstein}
\end{figure}

While we restricted our attention to constant drift and volatility coefficients, our method can also be used for the case where those coefficients are piecewise-constant functions of time. In that case, we can run the Markov chain simulation until the time there is a jump in the values of coefficients. Then, at the time of jump, we change the transition probabilities and start new simulation from the last location of the Markov chain on the grid.
\begin{figure}[t]
    \centering
    \begin{subfigure}[t]{0.45\linewidth}
        \centering
        \includegraphics[scale=0.45]{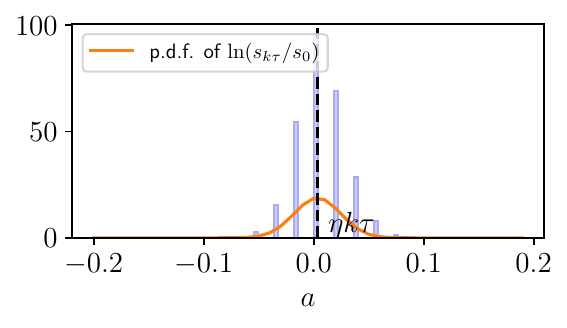} \\
        \includegraphics[scale=0.45]{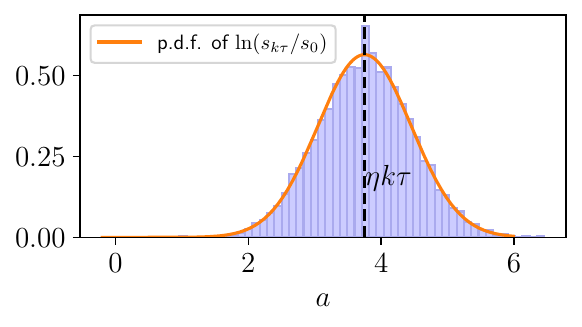}
    \end{subfigure}
    \begin{subfigure}[t]{0.45\linewidth}
        \centering
        \includegraphics[scale=0.45]{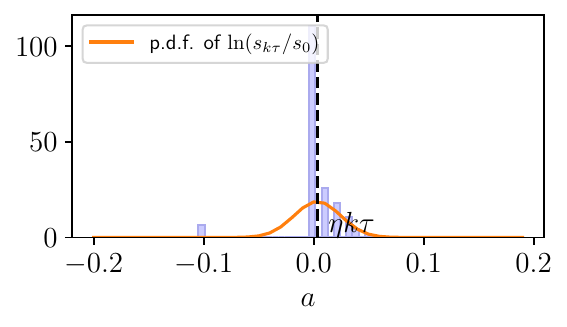}\\
        \includegraphics[scale=0.45]{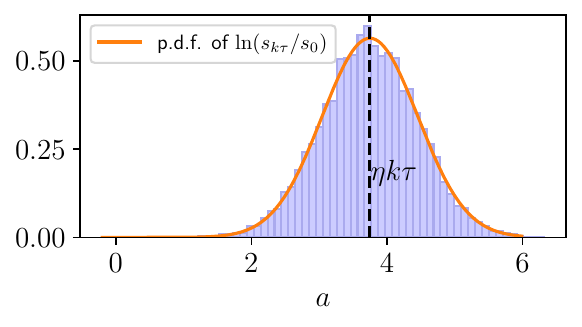}
    \end{subfigure}
    \caption{Probability density function of log-return $\ln(s_{k\tau} / s_0)$ compared to histograms of $r(k)$. Top-Left: Uniform grid for $k=10$, Top-Right: Nonuniform grid for $k=10$, Bottom-Left: Uniform grid for $k=10000$, Bottom-Right: Nonuniform grid for $k=10000$}
    \label{fig:two-histograms}
\end{figure}

% \section{Application to Regime Switching Models}\label{sec_regime_model}
% When it comes to pricing of the stocks, it is difficult to strictly explain the price using one specific type of mathematical model, since the price is sensitive toward the factors such as policy shift, inflation, and so on. Therefore, to make our model more flexible and precise against these factors, we consider regime switching model with multiple modes, where the switching between the modes is characterized by a continuous-time Markov chain. Specifically, we investigate
% \begin{align}
%     dS_t  = \mu(m_t) S_t \mathrm{d}t + \sigma(m_t) S_t \mathrm{d}W_t,
% \end{align}
% where $\{m_t\in\{1,2,3\ldots, M\}\}_{t\in[0,\infty)}$ is a continuous-time Markov chain, $\mu\colon\{1,2,\ldots,M\}\to\mathbb R$, $\sigma\colon\{1,2,\ldots,M\}\to\mathbb R$ are functions representing the drift and volatility in different modes. 

% The continuous-time Markov chain $\{m_t\in\{1,2,3\ldots, M\}\}_{t\in[0,\infty)}$ is characterized by its transition rate matrix $Q\in\mathbb{R}^{M\times M}$ that satisfies $Q_{i,i}\leq 0$ for $i\in\{1,2,3\ldots, M\}$, $Q_{i,j}\geq 0$ for $i,j\in\{1,2,3\ldots, M\}$ such that $i\neq j$, and $\sum_jQ_{i,j}=0$ for $i\in\{1,2,3\ldots, M\}$. The transition rate matrix can be used to obtain conditional transition probabilities by $\mathbb{P}(m_{t+\Delta}=j\mid m_{t}=i)=\left(e^{Q\Delta}\right)_{i,j}$. We consider a discrete-time approximation of  $\{m_t\in\{1,2,3\ldots, M\}\}_{t\in[0,\infty)}$ as $\{m^\mathrm{d}(k)\in\{1,2,\ldots,M\}\}$

\section{Conclusion}\label{sec:conclusion}
 In this study, we proposed a discrete-time Markov chain approach for approximating continuous time process with time-linear moments. In particular, we derived transition probabilities of a Markov chain on a nonuniform grid so as to guarantee that the expectation and the variance of the Markov chain matches those of a continuous process. We introduced a time-scaling factor to allow approximation at arbitrary times. We also discussed the approximation of geometric Brownian motion and heat diffusion based on our proposed approach. For future work, our goal is to extend our Markov chain approximation approach to other continuous process such as Levy processes including compound Poisson processes. We also aim to consider approximating regime-switching models.

\section*{Acknowledgements}
This work was supported by JSPS KAKENHI Grant No.~JP23K03913.

\bibliographystyle{plain}
\bibliography{references}

\end{document}